\newcommand{\re}{\mathrm{Re}}
\newtheorem{thm}{Theorem}[section]
\newcommand{\R}{\mathbb{R}}
\newcommand{\C}{\mathbb{C}}
\newcommand{\N}{\mathbb{N}}
\newcommand{\Z}{\mathbb{Z}}
\newcommand{\ceil}[1]{\lceil #1 \rceil}
\address{Department of Mathematics, Graduate School of Science, 
Kyoto University, Kyoto 606-8502, Japan}
\email{kato.ryo.78w@st.kyoto-u.ac.jp}
\keywords{Density theorem, Wiener-Ikehara theorem}
\subjclass[2010]{11M45}
\begin{document}
\title{A remark on the Wiener-Ikehara Tauberian Theorem}
\author{Ryo Kato}
\date{}

\maketitle

\textbf{Abstract.} In this paper we point out that the proof of 
Kable's extension of the Wiener-Ikehara Tauberian theorem can 
be applied to the case where the Dirichlet series has a pole 
of order "$l/m$" without much modification
(Kable proved the case $l = 1$).

\section{Introduction}
\label{sec:introduction}

We use the notation $\N,\Z,\R,\C$ for the sets of positive integers, 
all integers, real numbers, complex numbers respectively.  
Let  $\R_{> 0}$ denote the set of positive real numbers. 
For $x \in \R$ we put 
$\ceil{x} = \min\{ n \in \mathbb{Z} \ | \ n \ge x \}$.

Let $\{a_{n}\}$ be a sequence of non-negative real numbers, 
$d$ a positive real number, and $m, l \in \mathbb{N}$. Suppose that the Dirichlet series
\begin{equation*}
L(s) = \sum_{n=1}^{\infty} \frac{a_{n}}{n^s}   
\end{equation*}
converges absolutely for $\re(s) > d$. Also suppose that $L^m$ 
has a meromorphic continuation to an open set containing the 
closed half-plane $\re(s) \ge d$ and holomorphic except for 
a pole of order $l$ at $s = d$. In this paper, if $L^m$ has a 
pole of order $l$, then we say that $L$ has a pole of order $l/m$. 
For functions $f$ and $g : \R \to \R $, we denote 
$f(x) \sim g(x)$ if $\lim_{x \to +\infty} {f(x)}/{g(x)} = 1$.

Our purpose is to determine the asymptotic behavior of 
$\sum_{n \le X} a_{n}$ as $X \to \infty$ by properties of $L(s)$.

In the case where $L$ has a simple pole at $s = d$ (i.e., $m = l = 1$),  
Wiener and Ikehara proved that
\begin{equation*}
 \sum_{n \le X} a_{n} \sim \frac{A}{d}X^d
\end{equation*}
as $X \to \infty$, where $A$ is residue of $L$ at $s = d$. 
This result was proved in 1932(\cite{wiener}) and is called the Wiener-Ikehara theorem.

In the general case, 
an extension was given by Delange (\cite[p.235, TH\'EOR\`EM III]{delange}) 
in 1954. Delange considered the case where the order of the
pole is a positive real number in some sense. However,  
to apply his theorem to $L$ satisfying above conditions, 
an extra condition about zeros of $L^m$ on $\re(s) = d$ is required.
Kable has given an extension for the case where the order of the pole is
$1/m$  without the condition. 
In \cite{kable}, he used the notion of functions of bounded variation.

The result of Delange and Kable is as follows.
Let  $\alpha$ = $1/m$ or $l$.
If $\alpha = 1/m$, then suppose that the residue of $L^m$ at $s = d$ is $A^m$, where $A > 0$. 
If $\alpha = l$, then let $A = \lim_{s \to d} L(s)(s -d)^l \ (A>0)$.
Then
\begin{equation}
\sum_{n \le X} a_{n} \sim \frac{AX^d}{d\Gamma(\alpha)(\log(X))^{1-\alpha}} \label{1}
\end{equation}
as $X \to \infty$.
We shall point out in this paper that the proof of Kable's result 
works for the case, without much modification, 
where the order of the pole is $l/m$, and 
that (\ref{1}) holds with $\alpha=l/m$.

The organization of this paper is as follows. In Section \ref{sec:preliminaries}, 
we define symbols and functions used in this paper. Then we slightly
extend \cite[p.140, T{\footnotesize HEOREM} 1]{kable}.
In Section \ref{sec:main-theorem}, we apply the result in Section \ref{sec:preliminaries}  
to obtain the same result as \eqref{1} for the case $\alpha = l/m$. 
In Section \ref{sec:an-example}, we give an example 
of a Dirichlet series which has a pole of order $2/3$ and apply 
the main theorem (Theorem \ref{thm:main-theorem}).

\section{Preliminaries}
\label{sec:preliminaries}

Let $\lambda$ be $(2 \pi)^{-\frac{1}{2}}$ times the Lebesgue measure 
on $\R$ and $\mathcal{S}(\R)$ the space of Schwartz functions on $\R$. 
For $\Phi \in  \mathcal{S}(\R)$ define the Fourier transform by 
\begin{equation*}
\mathcal{F}(\Phi)(t) = \int_{\R} \Phi(u) e^{-iut} d\lambda(u). 
\end{equation*}
We define the inverse Fourier transform by 
\begin{equation*}
\mathcal{F}^{-1}(\Phi)(t) = \int_{\R} \Phi(u) e^{iut} d\lambda(u). 
\end{equation*}
Then 
\begin{equation*}
\mathcal{F}^{-1}(\mathcal{F}(\Phi))(t) = \Phi(t) \quad i.e.,\ \ 
\Phi(t) = \int_{\R} \mathcal{F}(\Phi)(u) e^{iut} d\lambda(u). 
\end{equation*}
For $\Phi, \Psi \in \mathcal{S}(\R)$, we define the convolution by 
\begin{equation*}
(\Phi * \Psi)(t) = \int_{\R} \Phi(t - u) \Psi(u) d \lambda (u). 
\end{equation*}
It is well known (see \cite[p.183, 7.2 Theorem]{fourier}) that
\begin{equation*}
\mathcal{F}(\Phi * \Psi) = \mathcal{F}(\Phi) \mathcal{F}(\Psi). 
\end{equation*}
%


For $s \in \C -(-\infty , 0]$ we choose the branch of $\log(s)$ 
so that $-\pi < arg(s) < \pi$. Let $\alpha \in \R$ and $x \in \C$. 
For $s \in \C - \{ s \in \C \ | \ s - x \in (\infty, 0] \}$ 
we define $(s-x)^{\alpha} = \exp(\alpha \log (s-x))$.
Then $(s-x)^\alpha$ is holomorphic on 
$\C - \{ s \in \C \ | \ s - x \in (\infty, 0] \}$ 
and has positive real values for $s \in \{s \in \C \ | \ s -x \in \R_{> 0}\}.$
 
Let $\mathcal{P}([a, b])$ denote the set of all partitions of $[a, b]$
(i.e., sequences $x_0=a<x_1<\cdots<x_n=b$).
For a function  $f :[a, b] \to \R$ we define 
$V_{a}^{b} f \in \R \cup \{\infty\}$ by 
\begin{equation*}
V_{a}^{b} f = \sup \left\{ \sum_{i = 1}^{n} 
\left| f(x_{i}) - f(x_{i-1}) \right| \bigg| 
P = \{x_{i} \ | \ i = 0,1,\cdots,n\} \\
\in \mathcal{P}([a, b])
\right\}. 
\end{equation*}
We say that a function $f :[a, b] \to \R$ is of bounded variation 
if $V_{a}^{b} f < + \infty$.
Also we say that a function $f :[a, b] \to \C$ is of bounded variation 
if the real part and the imaginary part of $f$ are of bounded variation.
If a function $f :\R \to \mathbb{C}$  is of bounded 
 variation on any closed-interval $[a , b]$, then
we say that $f$ is locally of bounded variation.

The following theorem is an extension of the Wiener-Ikehara 
theorem and plays a key role to prove the main theorem in Section 3.
This theorem slightly extends \cite[p.140, T{\footnotesize HEOREM} 1]{kable}.
 
\begin{thm}
\label{thm:preliminary-theorem}
Let $f : \mathbb{R} \to [0,\infty)$ be a non-decreasing function such that
the Laplace transform 
\begin{equation*}
F(s) = \int_{0}^{\infty} f(u) e^{-su} du 
\end{equation*}
converges absolutely for $\re(s) > 1$. 
Let $\alpha, \alpha_{i} \in \mathbb{R}_{ > 0} ( i = 1, 2, \cdots, r ), 
A \in \mathbb{R}_{> 0}, A_{i} \in \mathbb{C} $ and  
$\alpha > \alpha_{i}$ for all $i$. We define  
\begin{equation*}
G(s) = F(s) - \frac{A}{(s - 1)^{\alpha}} - \sum_{i=1}^{r} \frac{A_{i}}{(s - 1)^{\alpha_{i}}} 
\end{equation*}
for $\re(s) > 1$.
Suppose that the function G extends continuously to the set 
$\{s \in \mathbb{C}$ $|$ $\re(s) \ge 1\}$. 
If $\alpha < 1$ then assume, in addition, 
that once so extended, the function $ t \mapsto G(1 + it)$ is locally of bounded variation.
Then
\begin{equation*}
\lim_{u \to \infty} u^{1 - \alpha} e^{-u} f(u) = \frac{A}{\varGamma(\alpha)}. 
\end{equation*}
\end{thm}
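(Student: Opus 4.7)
The plan is to follow the proof of \cite[Theorem 1]{kable} essentially verbatim, verifying that the additional lower-order polar terms $A_i/(s-1)^{\alpha_i}$ contribute nothing to the leading asymptotic.

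First I would observe the elementary Laplace identity
\[ \int_0^\infty \frac{u^{\beta-1}}{\Gamma(\beta)}\, e^{-(s-1)u}\,du = \frac{1}{(s-1)^\beta} \qquad (\re(s)>1,\ \beta>0), \]
which identifies $G(s)$ on $\re(s)>1$ as the Laplace transform of
\[ r(u) := f(u) - \frac{A}{\Gamma(\alpha)}\, u^{\alpha-1} e^u - \sum_{i=1}^r \frac{A_i}{\Gamma(\alpha_i)}\, u^{\alpha_i-1} e^u \qquad (u>0). \]
Since each $\alpha_i<\alpha$, the correction sum is $o(u^{\alpha-1}e^u)$, so the theorem reduces to showing that $u^{1-\alpha}e^{-u}r(u)\to 0$ as $u\to\infty$.

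Next I would repeat Kable's Fourier-analytic device. For a Schwartz function $\Phi$ whose Fourier transform is supported in a compact interval $[-T,T]$, and $\epsilon>0$, Parseval yields (up to a factor of $\sqrt{2\pi}$)
\[ \int_{-\infty}^\infty e^{-(1+\epsilon)u}\, r(u)\,\Phi(u-y)\,du \;=\; \int_{-T}^T G(1+\epsilon+it)\,\mathcal{F}(\Phi)(-t)\, e^{iyt}\,dt, \]
where $r$ is extended by $0$ for $u\le 0$. Letting $\epsilon\downarrow 0$, continuity of $G$ on $\re(s)\ge 1$ together with dominated convergence produces a relation between the boundary function $G(1+it)$ and translated averages of $e^{-u}r(u)$; a Riemann--Lebesgue-type argument then drives the averaged integral to $0$ as $y\to\infty$.

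Finally I would pass from this averaged vanishing to the pointwise statement $u^{1-\alpha}e^{-u}r(u)\to 0$ by the standard Wiener--Ikehara sandwich: monotonicity of $f$, together with the explicit form of the subtracted singular terms, gives enough one-sided regularity of $r$ that its pointwise values are squeezed between its averages against positive bumps of arbitrarily small support. The main obstacle I expect is the case $\alpha<1$, which is the regime Kable himself addresses by imposing local bounded variation of $t\mapsto G(1+it)$; there an integration-by-parts against the boundary-variation measure is needed to make the Fourier inversion rigorous. Beyond that, the extension to arbitrary $\alpha>0$ and the inclusion of a finite sum of lower-order polar terms is bookkeeping: every estimate in Kable's argument uses $\alpha$ only through positivity and through analyticity of $(s-1)^{-\alpha}$ off $(-\infty,1]$, both of which are unchanged in the present setting.
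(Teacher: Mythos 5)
Your proposal matches the paper's proof: identify $G(s)$ as the Laplace transform of $r(u)=f(u)-\sum C_i u^{\alpha_i-1}e^u$ via the $\Gamma$-integral, run Kable's Fourier-analytic convolution device with a test function $\Phi$ whose Fourier transform has compact support, invoke bounded variation of $t\mapsto G(1+it)$ (Kable's Lemma 3) in the case $\alpha<1$ to obtain the weighted Riemann--Lebesgue decay, observe that the lower-order singular terms vanish after multiplying by $v^{1-\alpha}$, and finish with the monotonicity sandwich from Kable pp.~142--143. This is precisely the route taken in the paper, so the proposal is correct and equivalent in approach.
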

\begin{proof}

Define a function $h : \mathbb{R} \to [ 0,\infty)$ 
by $h(u) = u^{1 - \alpha} e^{-u} f(u)$ and let 
$C_{i} = A_{i} / \varGamma(\alpha_{i}), \ C = A / \varGamma(\alpha)$.
By \cite[p.140, L{\footnotesize EMMA} 2]{kable} 
\begin{equation*}
\frac{1}{(s - 1)^{\alpha}} = \frac{1}{\varGamma(\alpha)} 
\int_{0}^{\infty} e^{-(s-1) u} u^{\alpha - 1} du
\end{equation*}
for $\re(s) > 1$.  By definition
\begin{equation*}
F(s) = \int_{0}^{\infty} h(u) e^{-(s-1) u} u^{\alpha - 1} du.
\end{equation*}
It follows that 
\begin{equation*}
G(s) = \int_{0}^{\infty} \left(h(u) - C - \sum_{i=1}^{r} 
\frac{C_{i}}{u^{\alpha - \alpha_{i}}} \right) e^{-(s-1)u} u^{\alpha - 1} du
\end{equation*}
for $\re(s) > 1$.
Let $\Psi$ be an even Schwartz function with 
compact support and $\Phi = \mathcal{F}(\Psi)^2$
.
Since $\mathcal{F}(\Phi) = \Psi*\Psi$, 
$\mathcal{F}(\Phi)$ is even and has compact support.

Suppose that $\sigma > 1$ and $x > 0$. Since 
\begin{equation*}
F(\sigma) \ge \int_{x}^{\infty} f(u) e^{-\sigma u} du  \ \ , \ \ f(u) \ge 0 
\end{equation*}
and $f(u)$ is monotone increasing, we obtain $f(x) \leq \sigma e^{\sigma x}F(\sigma)$. Hence,
\begin{equation}
h(x) \leq \sigma F(\sigma) e^{(\sigma - 1) x} x^{1 - \alpha} \label{h,h}     
\end{equation}
for  any $x > 0$.

Let $\varepsilon > 0$ be a small number.
We choose $\sigma$ so that $0 < \sigma - 1 < \varepsilon$. 
For any $v \in \mathbb{R}_{ > 0}$, by using \eqref{h,h}, we obtain
\begin{equation}
\int_{0}^{\infty} h(u) \Phi(v -u) e^{-\varepsilon u} u^{\alpha - 1} du 
\leq \sigma F(\sigma) \int_{0}^{\infty} \Phi(v -u) e^{-\big( \varepsilon - (\sigma - 1)\big) u} du.\label{h,ab}
\end{equation}

Since $\Phi$ is bounded and  non-negative, the integral 
on the right-hand side of \eqref{h,ab} converges.
Since $h(u)$ is also non-negative,
the integral on the left-hand side converges absolutely. Moreover the integral 
\begin{equation*}
\quad \int_{0}^{\infty} \Phi(v - u) e^{-\varepsilon u} u^{\alpha_{i} - 1} du
\end{equation*}
 also converges absolutely.
Therefore, by Fubini's theorem, the following equation holds.
\begin{equation}
\begin{split}
&\int_{\mathbb{R}} \left( h(u) - C - \sum_{i=1}^{r} 
\frac{C_{i}}{u^{\alpha - \alpha_{i}}}\right) \Phi(v - u) 
e^{- \varepsilon u} u^{\alpha -1} du \\
 &=\int_{0}^{\infty} \left(h(u) - C - \sum_{i=1}^{r} 
\frac{C_{i}}{u^{\alpha - \alpha_{i}}}\right) 
e^{-\varepsilon u} u^{\alpha -1} 
\int_{\mathbb{R}} \mathcal{F}(\Phi)(t) e^{i(v - u)t} d\lambda(t) du \\
 &=  \int_{\mathbb{R}} \mathcal{F}(\Phi)(t) e^{ivt} \int_{0}^{\infty} 
\left( h(u) - C - \sum_{i=1}^{r}\frac{C_{i}}{u^{\alpha - \alpha_{i}}}
 \right) 
e^{-(it + \varepsilon) u} u^{\alpha -1} du d\lambda(t) \\ 
 &= \int_{\mathbb{R}} \mathcal{F}(\Phi)(t) e^{ivt} 
G(1 + \varepsilon + it) d\lambda(t) . \label{FH}
\end{split}
\end{equation}

By assumption, $G(s)$ is continuous on $\re (s) \ge 1$ 
and the support of $\mathcal{F}(\Phi)$ is compact.
Thus, $G(1 + \varepsilon + it)$ converges to  $G(1 + it)$ 
uniformly on the support of $\mathcal{F}(\Phi)(t)$ as $\varepsilon \to
 +0$. 
Therefore, 
\begin{equation}
\lim_{\varepsilon \to +0} \int_{\mathbb{R}} \mathcal{F}(\Phi)(t)
e^{ivt} G(1 + \varepsilon + it) d\lambda(t) 
= \int_{\mathbb{R}} \mathcal{F}(\Phi)(t) e^{ivt} G(1 + it) d\lambda(t). \label{F}
\end{equation}
As far as the left-hand side of \eqref{FH} is concerned, 
by using the monotone convergence theorem to each term,
we have 
\begin{equation}
\begin{split}
\lim_{\varepsilon \to +0} \int_{0}^{\infty} 
\Bigg( h(u) - &C - \sum_{i=1}^{r} \frac{C_{i}}{u^{\alpha - \alpha_{i}}}\Bigg)  
\Phi(v - u) e^{- \varepsilon u} u^{\alpha -1} du \\
&= \int_{\mathbb{R}} \left( h(u) - C - 
\sum_{i=1}^{r} \frac{C_{i}}{u^{\alpha - \alpha_{i}}} \right) 
\Phi(v - u) u^{\alpha -1} du . \label{H}              
\end{split}
\end{equation}

We may choose a closed-interval $[a,b]$ containing the support of $\mathcal{F}(\Phi)$.
By \cite[p.140 L{\footnotesize EMMA} 3]{kable}, if $\alpha < 1$
\begin{equation}
\begin{split}
\lim_{v \to \infty} 
& \ v^{1 - \alpha} \int_{\mathbb{R}} \mathcal{F}(\Phi)(t) 
e^{ivt} G(1 + it) d\lambda(t) \\
= & \lim_{v \to \infty} \ v^{1 - \alpha} 
\int_{a}^{b} \mathcal{F}(\Phi)(t) e^{ivt} G(1 + it) d\lambda(t) \\
= & \ 0 \label{vF}.
\end{split}
\end{equation}
If $\alpha \ge 1$, then we have the same 
equation by \cite[p.185, 7.5 Theorem]{fourier}.
Since the right-hand sides of \eqref{F} and 
\eqref{H} are equal, by using \eqref{vF}, we obtain
\begin{equation}
\lim_{v \to \infty} v^{1 - \alpha} \int_{0}^{\infty} \left(h(u) 
- C - \sum_{i=1}^{r} \frac{C_{i}}{u^{\alpha - \alpha_{i}}}\right) 
\Phi(v - u) u^{\alpha -1} du = 0 \label{A}
\end{equation}
for any $\alpha > 0$. 

By \cite[p.139, Lemma 1]{kable} we have
\begin{equation}
\lim_{v \to \infty} v^{1 - \alpha} \int_{0}^{\infty} 
\Phi(v - u) u^{\alpha - 1} du =  \int_{\mathbb{R}} \Phi(u) du \label{k.phi} 
\end{equation}
for any $\alpha > 0$. So, 
\begin{equation}
\begin{split}
\lim_{v \to \infty} v^{1 - \alpha}& \int_{0}^{\infty} 
\frac{C_{i}}{u^{\alpha - \alpha_{i}}} \Phi(v - u) u^{\alpha -1} du \\
&= \lim_{v \to \infty} \frac{1}{v^{\alpha - \alpha_{i}}} 
v^{1 - \alpha_{i}}\int_{0}^{\infty} C_{i}\Phi(v - u) u^{\alpha_{i} -1} du  \\
&= \ 0 	\label{C}		
\end{split}
\end{equation}
for $i = 1, 2, \cdots, r$. Therefore, by \eqref{A}, \eqref{k.phi}, and \eqref{C}, we have
\begin{equation}
\begin{split}
\lim_{v \to \infty} v^{1 - \alpha}\!\!\int_{0}^{\infty} h(u) \Phi(v - u) u ^{\alpha -1} du
&=\lim_{v \to \infty} C \ v^{1 - \alpha} \!\!\int_{0}^{\infty}\! \Phi(v - u) u^{\alpha - 1} du \notag \\ 
&= C \int_{\mathbb{R}} \Phi(u) du .
\end{split}
\end{equation}

The rest of the argument is similar to that 
in \cite[pp.142-143]{kable}, and we can conclude that 
\begin{equation*}
\lim_{u \to \infty} h(u) = C.
\end{equation*}
Therefore, Theorem \ref{thm:preliminary-theorem} is proved.
\end{proof}

\section{Main Theorem}
\label{sec:main-theorem}

In this section, we apply Theorem \ref{thm:preliminary-theorem} to Dirichlet series 
which are obtained by sequences of non-negative real numbers and satisfy some conditions.
The following theorem is the main application of Theorem \ref{thm:preliminary-theorem}.
\begin{thm}
\label{thm:main-theorem}
Let $\{a_{n}\}$ be a sequence of non-negative real numbers, 
$d \in \mathbb{R}_{> 0}$ and $m$ a positive integer. 
Suppose that the Dirichlet series
\begin{equation*}
L(s) = \sum_{n=1}^{\infty} \frac{a_{n}}{n^s}   
\end{equation*}
converges absolutely for $\re(s) > d$. 
Also suppose that $L^m$ has a meromorphic continuation 
to an open set containing the closed half-plane 
$\re(s) \ge d$ and holomorphic except for a pole of order $l$ at $s = d$ with
$\lim_{s \to d} L(s)^m (s - d)^l = A^m$, where $A > 0$. Then we have 
\begin{equation*}
\sum_{n \le X} a_{n} \sim \frac{AX^d}{d \varGamma (l / m) (\log(X))^{1 - \frac{l}{m}}} 
\end{equation*}
as $X \to \infty$.
\end{thm}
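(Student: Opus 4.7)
The plan is to reduce to Theorem \ref{thm:preliminary-theorem} with $\alpha = l/m$ via a single change of variable. I would first set $f(u) = \sum_{n \le e^{u/d}} a_n$ for $u \ge 0$, which is non-decreasing and nonnegative. A routine Abel summation together with the substitution $X = e^{u/d}$ identifies its Laplace transform as
\begin{equation*}
F(s) := \int_{0}^{\infty} f(u) e^{-su}\, du = \frac{L(ds)}{s}, \qquad \re(s) > 1.
\end{equation*}

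The key algebraic step is to extract the singular part of $F$ at $s = 1$. Since $F(s)^m = L(ds)^m/s^m$ has exactly one pole on the closed half-plane---of order $l$ at $s=1$---the function $h(s) := (s-1)^l F(s)^m$ is holomorphic near $s=1$ with $h(1) = A^m/d^l \ne 0$. Taking the branch of $h(s)^{1/m}$ positive at $s=1$ and expanding $h(s)^{1/m} = \sum_{k \ge 0} B_k (s-1)^k$ (so $B_0 = A/d^{l/m}$) produces a local identity
\begin{equation*}
F(s) = \sum_{k \ge 0} B_k (s-1)^{k - l/m}.
\end{equation*}
With $K = \ceil{l/m}$, subtracting the first $K$ terms defines $G(s) = F(s) - \sum_{k=0}^{K-1} B_k/(s-1)^{l/m - k}$, whose exponents $l/m > l/m - k > 0$ exactly match the data required by Theorem \ref{thm:preliminary-theorem}.

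I expect the main obstacle to be verifying that $G$ extends continuously to \emph{all} of $\{\re(s) \ge 1\}$, not merely to a neighborhood of $s=1$. Away from $s=1$ the singular terms are continuous, so this amounts to extending $L(ds)$ continuously across $\{\re(s) = 1\} \setminus \{1\}$. At a point $s_0$ on this line with $L(ds_0)^m \ne 0$, an $m$-th root of $L(ds)^m$ can be extracted holomorphically in a simply connected neighborhood, and uniqueness of analytic continuation forces the branch that agrees with the Dirichlet series on $\re(s) > 1$ to give the extension. At a zero of $L(ds)^m$ of order $k_0$ the identity $|L(ds)|^m = |L(ds)^m| \to 0$ forces $L(ds) \to 0$, so the extension by the value $0$ is continuous. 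Should $l/m < 1$, one also needs local bounded variation of $t \mapsto G(1+it)$; this follows from the same local analysis, since near such a zero the restriction $|L(d(1+it))| \sim |t - t_0|^{k_0/m}$ is monotone on each side of $t_0$, and elsewhere $G$ is smooth.

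Once the hypotheses are in place, Theorem \ref{thm:preliminary-theorem} delivers $\lim_{u \to \infty} u^{1-l/m} e^{-u} f(u) = B_0/\varGamma(l/m) = A/(d^{l/m} \varGamma(l/m))$. Reverting the substitution $u = d\log X$ and absorbing $d^{l/m} \cdot d^{1-l/m} = d$ yields the claimed asymptotic
\begin{equation*}
\sum_{n \le X} a_n \sim \frac{A X^d}{d\, \varGamma(l/m) (\log X)^{1 - l/m}}.
\end{equation*}
All remaining steps (the Abel summation, the Taylor computation of $h^{1/m}$, and the final algebraic simplification) are essentially mechanical; the branch-matching in the continuation of $L(ds)$ across $\re(s) = 1$ is the only delicate point.
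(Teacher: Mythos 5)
Your proposal follows the paper's argument essentially step for step: the same choice of $f$ and $F = L(ds)/s$, the same extraction of an $m$-th root of $(s-1)^l F(s)^m$ to obtain the singular expansion at $s=1$ with $K = \ceil{l/m}$ terms subtracted, the same local root-extraction of $L^m$ at other points of $\re(s)=1$ to verify the continuity and bounded-variation hypotheses, and the same final substitution $u = d\log X$. The one place the paper is a bit more careful is in checking local bounded variation at zeros of $L^m$ on the line, where it factors $L(d(1+it))$ itself as a root of unity times $\bigl(i(t-t_0)\bigr)^{k_0/m}$ times a holomorphic factor and verifies each is BV, whereas you phrase the argument in terms of the modulus $|L|$; the underlying idea is the same, but you should state it in terms of the complex-valued factorization rather than $|L|$.
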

\begin{proof}
For a subset $S \subset \R $, let $\phi_{S}$ be the characteristic function of $S$.
Define a function $f : \mathbb{R} \to [0 ,\infty)$ by 
\begin{equation*}
f(u) = \sum_{n=1}^{\infty} a_{n} \phi_{[d \log(n),\infty)}(u). 
\end{equation*}
By direct computation, we have 
\begin{equation*}
F(s) = \int_{0}^{\infty} f(u) e^{-su} du = \frac{1}{s} L(ds) 
\end{equation*}
for $\re(s) > 1$.

Let $L(s)^m (s - d)^l = Q(s)$.
Since $L(s)^m$ has a pole of order $l$ at $s = d$, 
$Q(s)$ is holomorphic around $s = d$ and $Q(d) = A^{m} ( \neq 0)$. 
So there exists a holomorphic function $P(s)$ 
defined on an open disc $D$  with center at $s = d$ 
such that $P(s)^{m} = Q(s)$. Since $L(s)(s - d)^{\frac{l}{m}}$ 
is holomorphic on $\re(s) > d$,
there exists an $m$-th root of unity $\zeta$ such that 
$L(s)(s - d)^{\frac{l}{m}} = \zeta\cdot P(s)$ on 
$\{s \in \C \ | \ \re(s) > d \} \cap D$. 
Therefore, $L(s)(s - d)^{\frac{l}{m}}$ can be 
extended to a holomorphic function around $s = d$, 
and $\lim_{s \to d} L(s)(s -d)^{\frac{l}{m}} = A \ (A \in
\mathbb{R}_{>0})$ 
since $L(s)(s - d)^{\frac{l}{m}}$ 
has positive real values for $s \in \mathbb{R}_{> d}$.  
Hence, $\frac{1}{s}L(ds) (d s - d)^{\frac{l}{m}}$ is holomorphic around $s = 1$.
Since 
\begin{equation*}
F(s)(s - 1)^{\frac{l}{m}} = \frac{1}{s} L(ds) (s - 1)^{\frac{l}{m}} 
= \frac{1}{sd^{\frac{l}{m}}} L(ds) (ds - d)^{\frac{l}{m}},
\end{equation*}
$F(s)(s - 1)^{\frac{l}{m}}$ is holomorphic around $s = 1$.

Therefore, there exists a holomorphic function $B(s)$ 
defined on an open disc $D'$ with center at $s = 1$ such that
\begin{equation*}
F(s)(s - 1)^{\frac{l}{m}} = \sum_{i=0}^{r} A_{i}(s -1)^i + (s -1)^{r + 1} B(s),
\end{equation*}
where $r = \ceil{ l /m } -1$ and $A_{i} \in \C$ for $i = 0, 1, \cdots, r$.
Then
\begin{equation*}
\lim_{s \to 1} F(s)(s - 1)^{\frac{l}{m}} = \frac{A}{d^{\frac{l}{m}}} 
\ \ \ i.e., \ \ A_{0} = \frac{A}{d^{\frac{l}{m}}}.
\end{equation*}
Thus, we have 
\begin{equation*}
F(s) = \frac{A_{0}}{(s - 1)^{ \frac{l}{m}}} + \sum_{i=1}^{r} 
\frac{A_{i}}{(s - 1)^{ \frac{l}{m} - i}} + (s -1)^{r + 1 - \frac{l}{m}} B(s)
\end{equation*}
for $s \in D' \cap \{ s \in \mathbb{C} \ | \ \re(s) > 1 \}$. 

Now, we define
\begin{equation*}
\begin{split}
G(s) &= F(s) - \frac{A_{0}}{(s - 1)^{\frac{l}{m}}} 
- \sum_{i=1}^{r} \frac{A_{i}}{(s - 1)^{\frac{l}{m} - i}}\\
&= \frac{1}{s} L(ds) - \frac{A_{0}}{(s - 1)^{\frac{l}{m}}} 
- \sum_{i=1}^{r} \frac{A_{i}}{(s - 1)^{\frac{l}{m} - i}}
\end{split}
\end{equation*}
for $\re(s) > 1$.
  
We shall prove the following (a), (b).
\begin{flushleft}
\begin{itemize}
\item[(a)] 
The function $G(s)$ extends continuously to 
the set $\{ s \in \mathbb{C} \ | \ \re(s) \ge 1 \}$.
\item[(b)]
The function $x \mapsto G(1 + ix)$ $(x \in \R)$ is locally of bounded variation.
\end{itemize}
\end{flushleft}
Then $f(s)$, $F(s)$, and $G(s)$ satisfy the 
condition of Theorem \ref{thm:preliminary-theorem}.
It is enough to prove (a), (b) locally.

We first consider the neighborhood of $s = 1$.
By definition 
\begin{equation*}
G(s) = (s - 1)^{r + 1 - \frac{l}{m}} B(s) 
\end{equation*}
for $s \in D' \cap \{ s \in \mathbb{C} \ | \ \re(s) > d \}$.

By Theorems 10.2 of \cite[p.192]{real}, 
the function $x \mapsto B(1 + ix)$ is 
of bounded variation on any closed-interval $[a,b]$, 
where $\{1 + ix \in \mathbb{C} \ | \ x \in [a,b] \}  \subset D'$.
For any $\alpha > 0$ we can obviously 
extend $(s - 1)^{\alpha}$ to a continuous function on 
$\re(s) \ge 1$.  By a similar argument as in \cite[p.144]{kable}, 
the function $x \mapsto \bigl((1 + ix) - 1\bigr)^{\alpha}$ $(x \in \R)$ 
is locally of bounded variation.   

Therefore, if $r + 1 - \frac{l}{m} > 0$, then $G(s)$ extends 
continuously to the set $D' \cap \{ s \in \C \ | \ \re(s) \ge 1 \}$ 
and the function $x \mapsto G(1 + ix)$ ($x \in \mathbb{R}$) is of 
bounded variation on any closed-interval $[a,b]$, where 
$\{1 + ix \in \mathbb{C} \ | \ x \in [a,b] \}  \subset D'$. 
If $r + 1 - \frac{l}{m} = 0$, then $G(s)$ has the same properties 
since $G(s) = B(s)$ on $D' \cap \{ s \in \C \ | \ \re(s) > 1 \} $.
Thus, (a), (b) are proved around $s = 1$.

Next we consider the neighborhood of $s = 1 + ix_{0}$, 
where $x_{0} \in \R - \{0\}$. By the definition of $G(s)$, 
it is enough to prove (a), (b) for $L(ds)$ instead of $G(s)$.

Since $L^{m}$ is holomorphic around $s = d + ix_{0}$, 
there exists a holomorphic function $R(s)$ 
defined for $s$ near $d + ix_{0}$ such that 
\begin{equation*}
L(s)^m = \bigl(s - (d + ix_{0}) \bigr)^k R(s),  \ \ 
R(d + ix_{0}) \neq 0, \  \ and \ \ \  k \ge 0.
\end{equation*}
Since $R(d + ix_{0}) \neq 0$, there exists a 
holomorphic function $T(s)$ defined on an open disc 
$D''$ with center at $s = d + ix_{0}$ such that $T(s)^{m} = R(s)$.
Then, there exists an $m$-th root of unity  $\zeta$ such that  
\begin{equation*}
L(s) = \zeta \cdot \bigl(s - (d + ix_{0}) \bigr)^{\frac{k}{m}} T(s) 
\end{equation*}
on $D'' \cap \{ s \in \mathbb{C} \ | \ \re(s) > d\}$.

Now, for any $x_{0} \in \R - \{0\}$ we  can extend 
$\bigl(s - (d + ix_{0}) \bigr)^{\frac{k}{m}}$ 
to a continuous function on $\re(s) \ge d$ and 
the function $x \mapsto \bigl(d + ix - (d + ix_{0})\bigr)^{\frac{k}{m}}$ 
is locally of bounded variation also as in \cite[p.144]{kable}.
Thus, $L(s)$ extends continuously to the set  
$D'' \cap \{ s \in \mathbb{C} \ | \ \re(s) \ge d\}$ 
and the function $x \mapsto L(d + ix)$ is of 
bounded variation around $x_{0}$.  It follows 
that we can extend  $L(ds)$ to a continuous function 
on $\re(s) \ge 1$ except $s = 1$
and the function $x \mapsto L(d + idx)$ is of 
bounded variation on any closed-interval $[a,b]$ 
not containing $x = 0$. Thus, we have proved (a), (b).

Therefore, we can apply Theorem \ref{thm:preliminary-theorem} 
to $f(s)$, $F(s)$, $G(s)$, and  $\alpha = l / m$. So we have
\begin{equation}
\lim_{u \to \infty} u^{1 - \frac{l}{m}} e^{-u} f(u) 
= \frac{A_{0}}{\varGamma(l / m)} 
= \frac{A}{d^{\frac{l}{m}}\varGamma(l / m)}. \label{last}
\end{equation}
Since $f(d \log X ) = \sum_{n \le X} a_{n}$, by substituting  
$d\log X$ for $u$ in \eqref{last}, we have
\begin{eqnarray*}
\lim_{X \to \infty} (d \log X)^{1 - \frac{l}{m}} e^{-(d \log X)} f(d\log X) 
= \frac{A}{d^{\frac{l}{m}}\varGamma(l / m)}, \\
\lim_{X \to \infty} d^{1 - \frac{l}{m}} (\log X)^{1 - \frac{l}{m}} X^{-d} 
\sum_{n \le X} a_{n} = \frac{A}{d^{\frac{l}{m}}\varGamma(l / m)}.
\end{eqnarray*}
Therefore,
\begin{equation*}
\sum_{n \le X} a_{n} \sim \frac{AX^d}{d \varGamma (l / m) (\log(X))^{1 - \frac{l}{m}}}. 
\end{equation*}
\end{proof}

\section{An Example}
\label{sec:an-example}

We give an example of a Dirichlet series having a pole of 
order "$2 / 3$" and apply Theorem \ref{thm:main-theorem}.
 
If $n = {p_{1}}^{e_{1}}{p_{2}}^{e_{2}}\cdots{p_{r}}^{e_{r}}$ 
is a prime decomposition of a positive integer $n$ 
where $p_1,\ldots,p_r$ are distinct primes, then 
we define $f(n) = \sum_{n = 1}^{r} e_{i}$.
We define a Dirichlet series $L$ by
\begin{equation*}
L(s) = \prod_{p : prime} \left(1 - \frac{2}{3} p^{-s} \right)^{-1} 
= \sum_{n = 1}^{\infty} \left(\frac{2}{3} \right)^{f(n)} n^{-s}. 
\end{equation*}
Then the function $L(s)$ is holomorphic on $\re(s) > 1$ 
since $\left(2/3 \right)^{f(n)} \le 1$. 
We claim that $L(s)^3$ has a meromorphic continuation 
to an open set containing $\re(s) \ge 1$ and 
holomorphic except for a pole of order $2$ at $s = 1$.

By computation
\begin{equation*}
L(s)^{3} = \prod_{p : prime} \left(1 - 2p^{-s} 
+ \frac{4}{3}p^{-2s} - \frac{8}{27}p^{-3s} \right)^{-1},
\end{equation*}
and
\begin{equation*}
\begin{split}
 \frac{L(s)^{3}}{\zeta(s)^2} &= \prod_{p : prime} \frac{(1 - p^{-s})^{2}}
{ \left(1 - 2p^{-s} + \frac{4}{3}p^{-2s} - \frac{8}{27} p^{-3s}\right)}\\
&= \prod_{p : prime} \frac{(1 - p^{-s})^2(1 + p^{-s})^{2}}{ \left(1 - 
2p^{-s} + \frac{4}{3}p^{-2s} - \frac{8}{27}p^{-3s}\right)(1 + p^{-s})^2}\\
&= \prod_{p : prime} \frac{(1 - p^{-2s})^{2}}{ \left(1 - \frac{5}{3}p^{-2s} 
+ \frac{10}{27}p^{-3s} + \frac{20}{27}p^{-4s} - \frac{8}{27}p^{-5s} \right) }\\
&= \zeta(2s)^{-2} \prod_{p : prime}\left(1 - \frac{5}{3}p^{-2s} 
+ \frac{10}{27}p^{-3s} + \frac{20}{27}p^{-4s} - \frac{8}{27}p^{-5s} \right)^{-1}.
\end{split}
\end{equation*}
Let 
\begin{equation*}
F(s) = \prod_{p : prime} \left(1 - \frac{5}{3}p^{-2s} + \frac{10}{27}p^{-3s} 
+ \frac{20}{27}p^{-4s} - \frac{8}{27}p^{-5s} \right).
\end{equation*}

If $\re(s) \ge 0$, then we have
\begin{equation*}
\left|- \frac{5}{3}p^{-2s} + \frac{10}{27}p^{-3s} + \frac{20}{27}p^{-4s} 
- \frac{8}{27}p^{-5s} \right| \le \frac{83}{27}|p^{-2s}|.
\end{equation*}
Thus, if $\re(s) > 1/2$, then
\begin{equation*}
\begin{split}
\sum_{p : prime } \left| -\frac{5}{3}p^{-2s} + \frac{10}{27}p^{-3s} 
+ \frac{20}{27}p^{-4s} - \frac{8}{27}p^{-5s}\right|  
&\le \sum_{p : prime } \frac{83}{27}p^{-2\re(s)}\\
&< +\infty.
\end{split}
\end{equation*}
Therefore, 
\begin{equation*}
\sum_{p : prime } \left|- \frac{5}{3}p^{-2s} + \frac{10}{27}p^{-3s} 
+ \frac{20}{27}p^{-4s} - \frac{8}{27}p^{-5s}\right|
\end{equation*}
converges absolutely and uniformly on any compact 
subset of the open half-plane $\re(s) > 1 / 2$.
It follows that 
\begin{equation*}
\prod_{p : prime }  \left(1 - \frac{5}{3}p^{-2s} + \frac{10}{27}p^{-3s} 
+ \frac{20}{27}p^{-4s} - \frac{8}{27}p^{-5s}\right)
\end{equation*}
converges uniformly on any compact subset of the open 
half-plane $\re(s) > 1 / 2$ and is holomorphic on 
$\re(s) > 1/2$ (see \cite[p.300, 15.6 Theorem ]{complex}).
Thus, $F(s)$ is holomorphic on $\re(s) > 1/2$.

For any prime number $p$, since $(1 -\frac{2}{3}p^{-s})$, 
$(1 + p^{-s}) \neq 0$ on $\re(s) > 1/2$,
\begin{equation*}
\left(1 - \frac{5}{3}p^{-2s} + \frac{10}{27}p^{-3s} 
+ \frac{20}{27}p^{-4s} - \frac{8}{27}p^{-5s} \right) 
= \left(1 + p^{-s}\right)^{2}\left(1 -\frac{2}{3}p^{-s}\right)^{3} \neq 0
\end{equation*}
on $\re(s) > 1/2$. 
Hence $F(s) \neq 0$ on $\re(s) > 1/2$ i.e., $F(s)^{-1}$ is  
holomorphic on $\re(s) > 1/2$ (see \cite[p.300, 15.6 Theorem]{complex}).

It follows that $L(s)^3$ has a meromorphic continuation to 
$\re(s) > 1 / 2$ and is holomorphic except 
for a pole of order $2$ at $s = 1$.  Moreover, 
\begin{equation*}
\begin{split}
\lim_{s \to 1} L(s)^{3}(s - 1)^2 &= \lim_{s \to 1} 
\frac{L(s)^{3}}{\zeta(s)^2} \left(\zeta(s)(s - 1) \right)^2 \\
&=  \prod_{p : prime} \frac{(1 - p^{-2})^{2}}{\left(1 - \frac{5}{3}p^{-2} 
+ \frac{10}{27}p^{-3} + \frac{20}{27}p^{-4} - \frac{8}{27}p^{-5}\right) }\\
&= \zeta(2)^{-2} F(1)^{-1}.
\end{split}
\end{equation*}

By applying Theorem \ref{thm:main-theorem} to $L(s)$, we have
\begin{equation*}
\sum_{n \le X} \left(\frac{2}{3} \right)^{f(n)} 
\! \sim \ \ \frac{A}{\Gamma(2 / 3)} \cdot \frac{X}{\log(X)^{1/3}},  
\end{equation*}
where 
\begin{equation*}
A = \left( \zeta(2)^{2} \prod_{p : prime} 
\left(1 - \frac{5}{3}p^{-2} + \frac{10}{27}p^{-3} 
+ \frac{20}{27}p^{-4} - \frac{8}{27}p^{-5}\right) \right)^{-\frac{1}{3}}. 
\end{equation*}

\bibliographystyle{plain}
\bibliography{katorefs}

\begin{thebibliography}{1}

\bibitem{delange}
H.~Delange.
\newblock G\'en\'eralisation du th\'eor\`eme de {I}kehara.
\newblock {\em Ann. Sci. Ecole Norm. Sup. (3)}, 71:213--242, 1954.

\bibitem{kable}
A.C. Kable.
\newblock A variation of the {I}kehara-{D}elange {T}auberian theorem and an
  application.
\newblock {\em Comment. Math. Univ. St. Pauli}, 57(2):137--146, 2008.

\bibitem{real}
M.~H. Protter.
\newblock {\em Basic elements of real analysis}.
\newblock Undergraduate Texts in Mathematics. Springer-Verlag, New York, 1998.

\bibitem{complex}
W.~Rudin.
\newblock {\em Real and complex analysis}.
\newblock McGraw-Hill Book Co., New York, third edition, 1987.

\bibitem{fourier}
W.~Rudin.
\newblock {\em Functional analysis}.
\newblock International Series in Pure and Applied Mathematics. McGraw-Hill,
  Inc., New York, second edition, 1991.

\bibitem{wiener}
N.~Wiener.
\newblock Tauberian theorems.
\newblock {\em Ann. of Math. (2)}, 33(1):1--100, 1932.

\end{thebibliography}

\end{document}